\newtheorem{theorem}{Theorem}[section]
\newtheorem{lemma}[theorem]{Lemma}
\theoremstyle{definition}
\theoremstyle{remark}
\newtheorem{remark}[theorem]{Remark}
\newcommand{\dontprint}[1]\relax
\newcommand{\ot}{\otimes}
\renewcommand{\SS}{{\mathcal S}}
\newcommand{\si}{\sigma}
\newcommand{\sub}{\subset}
\newcommand{\ov}{\overline}
\newcommand{\rk}{{\operatorname{rk}}}
\renewcommand{\k}{\mathbf{k}}
\newcommand{\End}{{\operatorname{End}}}
\title{Almost invariant subspaces and operators}
\author{David Kazhdan}
\author{Alexander Polishchuk}
\thanks{A.P. is partially supported by the NSF grant DMS-2001224, 
%by the National Center of Competence in Research ``SwissMAP --- The Mathematics of Physics'' of the Swiss National Science Foundation, 
and within the framework of the HSE University Basic Research Program and by the Russian Academic Excellence Project `5-100'.}
\dedicatory{To Victor Guillemin, with admiration}
\address{Einstein Institute of Mathematics,
The Hebrew University of Jerusalem,
Jerusalem 91904, Israel}
\email{kazhdan@math.huji.ac.il}
\address{
    Department of Mathematics, 
    University of Oregon, 
    Eugene, OR 97403, USA; National Research University Higher School of Economics; and Korea Institute for 
    Advanced Study 
  }
  \email{apolish@uoregon.edu}
\begin{document}

\begin{abstract}
We give an elementary proof of an efficient version of the Wagner's theorem on almost invariant subspaces (see \cite{Wagner}) and deduce some consequences in the context of Galois extensions.
\end{abstract}

\maketitle

\section{Introduction}

The goal of this note is to present an elementary proof of the following linear algebra result
and to consider its consequences in the context of Galois extensions (see Theorem B). In \cite{KP} this result is applied to the study of the Schmidt rank of quartic polynomials.

%\begin{theorem}\label{subspaces-thm}

\medskip

\noindent
{\bf Theorem A}. {\it
Let $V$ be a vector space over a field $\k$, $X$ a collection of subspaces of $V$, $G$ a group acting linearly on $V$ and preserving $X$ setwise.
For $x\in X$ we denote by $A_x\sub V$ the corresponding subspace.
Assume that for some $r\ge 1$ the following condition holds: for any $x,y\in X$ one has $\dim A_x/(A_x\cap A_y)\le r$.
Then there exists a $G$-invariant subspace $W\sub V$, which is a finite sum of some finite intersections of subspaces from $X$, 
such that 
$$\dim W/(W\cap A_x)\le r, \ \ \dim A_x/(W\cap A_x)\le r\cdot (r+1)^{r+1}.$$
}

\medskip

Originally, this theorem was proved by Wagner \cite{Wagner}, using model theory and without an explicit bound on $\dim A_x/(W\cap A_x)$.
To get an explicit bound we combine Wagner's proof with the idea of the proof of Neumann's explicit bound in Bergman-Lenstra's theorem on almost normal subgroups (see \cite{BL}). We conjecture that it should be possible to improve the bound $r\cdot (r+1)^{r+1}$
to a polynomial (possibly linear) function of $r$, at least in the case when $G$ is finite.

In section 3 we show that in the special case when $G$ is a finite group interchanging vectors of a basis, and subspaces are spanned by subsets of this bases, one can replace  $r\cdot (r+1)^{r+1}$ by $2r$. This reduces to the known set-theoretic result
(see \cite{Neumann}, \cite{BPP}), for which we give a new short proof assuming that $G$ is finite (see Theorem \ref{set-thm}). 
Our version of this result suggests a formula for the approximating $G$-invariant subspace, for which a better bound (polynomial or even linear in $r$) might hold, see Sec.\ \ref{set-sec} for a discussion.

As a corollary of Theorem A, we obtain the following result about linear subspaces and linear operators that are almost invariant
under the action of the Galois group. This result will be used in our forthcoming study of the Schmidt rank of quartics over non-closed fields.

\medskip

\noindent
{\bf Theorem B}. {\it
Let $E/\k$ be a finite Galois extension with the Galois group $G$, and let $V_0$ be a finite dimensional $\k$-vector space.
Let us set $V=V_0\ot_{\k} E$. We consider the natural action of $G$ on $V$ and the induced action of $G$ on the set of
$E$-linear subspaces of $V$ and on $\End_E(V)$. 
%(both actions are not $E$-linear).

\noindent
(i) Suppose $A\sub V$ is an $E$-linear subspace such that for each $\si\in G$, one has 
$$\dim_E(A/(A\cap \si A))\le r,$$ 
for some $r\ge 0$. Then there exists a $\k$-linear subspace $W_0\sub V_0$ such that for
$W=W_0\ot E\sub V$ one has
$$\dim_E(W/(W\cap A))\le r, \ \ \dim_E(A/(W\cap A))\le r\cdot (r+1)^{r+1}.$$

\noindent
(ii) Suppose $V'_0$ is another finite dimensional $\k$-vector space, $V'=V'_0\ot_{\k} E$, and
$T:V\to V'$ is an $E$-linear operator such that for any $\si\in G$, one has 
$$\rk_E(\si(T)-T)\le r,$$ 
for some $r\ge 0$.
Then there exists a $\k$-linear operator $T_0:V_0\to V'_0$, such that 
$$\rk_E(T-(T_0)_E)\le 2r+r\cdot (r+1)^{r+1},$$
where $(T_0)_E:V\to V'$ is obtained from $T_0$ by the extension of scalars.
}

\medskip

\noindent
{\it Acknowledgment}. We are grateful to  Ehud Hrushovski for pointing out the references and for useful discussions.

\section{Linear algebra results}\label{lin-alg-sec}

\subsection{Proof of Theorem A}
Notation: for a subset $S\sub X$ we set $A_S:=\cap_{x\in S} A_x$.
 
For each $m$, $0\le m\le r$, let $\SS_m$ denote the set of all nonempty subsets $S\sub X$ such that 
$$\dim (A_S+A_x)/A_x \le m \ \text{ for any } x\in X.$$
Note that $S=X$ is contained in $\SS_0$ and $\SS_{m-1}\sub \SS_m$.
Set
$$h(m)=\min_{S\in \SS_m} |S|$$
which is either a natural number or $\infty$. Note that by assumption $h(r)=1$. We also have $h(m-1)\ge h(m)$.

We consider separately two cases.

\medskip

\noindent
{\bf Case 1}. There exists $m$, $1\le m\le r$, such that $h(m)$ is finite and $h(m-1)>(r+1)h(m)+1$. Let us take the maximal $m$ with this property
and set
$$W:=\sum_{S\in \SS_m: |S|=h(m)} A_S.$$
Since for all $m'>m$ we have $h(m'-1)\le (r+1)h(m')+1$, we get
$$h(m)\le (r+1)^{r-m}+\ldots +(r+1)^2+(r+1)+1\le (r+1)^r.$$

Note that since $A_S\sub W$ for some $S\in \SS_m$ with $|S|=h(m)$, we have for any $x\in X$,
$$\dim A_x/A_x\cap W\le \dim A_x/A_x\cap A_S\le r\cdot h(m)\le r\cdot (r+1)^r.$$

Next, we claim that for any $x\in X$ one has $\dim W/W\cap A_x \le r$. Indeed, suppose there exists $x\in X$ such that 
$\dim (W+A_x)/A_x\ge r+1$. Then there exist $r+1$ subsets $S_1,\ldots,S_{r+1}\in \SS_m$ with $|S_i|=h(m)$, such that
$$\dim (\sum_{i=1}^{r+1} A_{S_i} + A_x)/A_x \ge r+1.$$
Let us consider the subset 
$$T:=\{x\}\cup S_1\cup\ldots\cup S_{r+1}\sub X.$$
Then we have $|T|\le (r+1)h(m)+1<h(m-1)$. Hence, $T\not\in \SS_{m-1}$, so there exists an element $y\in X$ such that
$$\dim (A_T+A_y)/A_y \ge m.$$
But for any $i=1\ldots,r+1$, we have 
$$(A_T+A_y)/A_y\sub (A_{S_i}+A_y)/A_y,$$
and the latter space has dimension $\le m$ by the definition of $\SS_m$. Hence, we have
$A_T+A_y=A_{S_i}+A_y$ for any $i$. Therefore, 
$$\sum_{i=1}^{r+1} A_{S_i}\sub A_T+A_y\sub A_x+A_y,$$
so 
$$\dim (\sum_{i=1}^{r+1} A_{S_i} + A_x)/A_x\le \dim (A_x+A_y)/A_x\le r,$$
which is a condtradiction.

Finally, we note that since $W/(W\cap A_S)$ is finite dimensional, for any $S\in \SS_m$, $W$ can be written as a finite sum of some
intersections $A_S$.

\medskip

\noindent
{\bf Case 2}. For each $m=1,\ldots,r$ one has $h(m-1)\le (r+1)h(m)+1$. This implies that
$$h(0)\le (r+1)^r+\ldots +(r+1)^2+(r+1)+1\le (r+1)^{r+1}.$$ 
Note that $\SS_0$ consists of $S$ such that $A_S=\cap_{x\in X} A_x$.
In this case we set
$$W:=\cap_{x\in X} A_x.$$
Then since there exists a subset $S\in \SS_0$ with $|S|=h(0)$, we get
$$\dim A_x/A_x\cap W=\dim A_x/A_x\cap A_S\le r\cdot h(0)\le r(r+1)^{r+1}.$$
This ends the proof.

\subsection{Almost invariant operators}

%We can use Theorem A to approximate almost $G$-invariant operators by $G$-invariant ones. The trick is to consider
%the graphs.
In this section we use Theorem A to approximate almost $G$-invariant operators by $G$-invariant ones by considering their graphs.

\begin{lemma}\label{operators-cor}
Let $V$ and $V'$ be linear representation of a group $G$, such that any $G$-invariant subspace $W\sub V$ (resp., $W'\sub V'$) of finite codimension (resp., dimension) 
admits a $G$-invariant complement.
Assume that $T:V\to V'$ is a linear operator such that for any $g\in G$, one has $\rk(gTg^{-1}-g)\le r$, for some $r\ge 0$.
Then there exists a $G$-invariant operator $T_0:V\to V'$ such that 
$$\rk(T-T_0)\le 2r+r\cdot (r+1)^{r+1}.$$
\end{lemma}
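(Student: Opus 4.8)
The plan is to apply Theorem A to the graph of $T$, viewed as a subspace of $V\oplus V'$. First I would set up the right $G$-action on the big space $V\oplus V'$ so that the condition $\rk(gTg^{-1}-g)\le r$ translates into the almost-invariance hypothesis of Theorem A. Let $\Gamma_T=\{(v,Tv):v\in V\}\subset V\oplus V'$ be the graph of $T$. The key observation is that $g\cdot\Gamma_T=\Gamma_{gTg^{-1}}$ (where $g$ acts diagonally on $V\oplus V'$), so that comparing $\Gamma_T$ with $g\Gamma_T$ amounts to comparing $T$ with $gTg^{-1}$. The assumption $\rk(gTg^{-1}-g)\le r$ — which I read as saying the two operators differ by an operator of rank $\le r$ — should give $\dim \Gamma_T/(\Gamma_T\cap g\Gamma_T)\le r$, because the intersection of the two graphs is the graph of $T$ restricted to the subspace where $T$ and $gTg^{-1}$ agree, whose codimension is controlled by the rank of the difference.

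Next I would take $X$ to be the $G$-orbit $\{g\Gamma_T: g\in G\}$ of the graph inside $V\oplus V'$, so that $G$ preserves $X$ setwise and acts linearly on $V\oplus V'$. Theorem A then produces a $G$-invariant subspace $W\subset V\oplus V'$ with
$$\dim W/(W\cap \Gamma_T)\le r, \quad \dim \Gamma_T/(W\cap \Gamma_T)\le r\cdot(r+1)^{r+1}.$$
So $W$ approximates the graph $\Gamma_T$ on both sides with the stated codimensions. The remaining task is to convert the $G$-invariant subspace $W$ back into a $G$-invariant \emph{operator} $T_0:V\to V'$ whose graph is close to $W$, hence close to $\Gamma_T$. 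This is where the hypothesis about $G$-invariant complements enters: $W$ need not itself be a graph, so I must straighten it out into one.

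The conversion step is where I expect the main obstacle. A graph is characterized by projecting isomorphically to $V$; an arbitrary $W$ close to $\Gamma_T$ will project to $V$ with a small kernel and small cokernel, measured by the two dimensions above. The plan is to use the projection $\pr_V:W\to V$, whose kernel $K=W\cap(0\oplus V')$ and cokernel both have bounded dimension, and then modify $W$ inside its $G$-invariance to obtain a genuine $G$-invariant graph. Concretely I would replace $W$ by a $G$-invariant subspace $W'$ on which $\pr_V$ is an isomorphism onto a $G$-invariant subspace of finite codimension in $V$, using the complement hypotheses to split off the kernel $K$ (a $G$-invariant subspace of $V'$ of finite dimension) and to extend over the cokernel (a $G$-invariant subspace of $V$ of finite codimension). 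The operator $T_0$ defined on this finite-codimension $G$-invariant subspace is then extended to all of $V$ by zero on a $G$-invariant complement, and the total discrepancy $\rk(T-T_0)$ is bounded by summing the contributions: the $r\cdot(r+1)^{r+1}$ from $\dim\Gamma_T/(W\cap\Gamma_T)$ together with two separate contributions of size $\le r$ coming from the kernel and cokernel corrections, giving the claimed bound $2r+r\cdot(r+1)^{r+1}$. The careful bookkeeping of these rank contributions, and verifying that each correction can be made $G$-equivariantly using the complement hypotheses, is the delicate part.
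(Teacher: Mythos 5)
Your proposal follows the paper's own proof essentially step for step: form the graph $\Gamma_T$, translate $\rk(gTg^{-1}-T)\le r$ into $\dim \Gamma_T/(\Gamma_T\cap g\Gamma_T)\le r$, apply Theorem A to the orbit of the graph, and then straighten the resulting $G$-invariant subspace $W$ into a $G$-invariant operator by means of the kernel $K=W\cap(0\oplus V')$ and the image $I$ of the projection to $V$, using the $G$-invariant complements exactly as the hypothesis permits. One small correction to your bookkeeping: the cokernel term $\dim V/I$ is the contribution bounded by $r(r+1)^{r+1}$ (via $\dim \Gamma_T/(W\cap\Gamma_T)$), while the two contributions of size $\le r$ come from the kernel $K$ and from the residual rank of the straightened operators over $I$, both controlled by $\dim W/(W\cap\Gamma_T)\le r$ — the total is the same, but the terms are attributed differently than you state.
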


\begin{proof}
Let $A=A_T\sub V\oplus V'$ denote the graph of $T$, i.e., $A=\{(v,Tv)\}$. Note that if $A$ and $A'$ are graphs of $T$ and $T'$ then
$T\cap T'=\{(v,Tv) \ | \ v\in \ker(T-T')\}$, so 
$$\dim T/T\cap T'=\rk(T-T').$$ 

Thus, the assumption on $T$ implies that $\dim (A/(A\cap gA))\le r$ for all $g\in G$. Applying Theorem A,
we obtain a $G$-invariant subspace $A_0\sub V\oplus V'$ such that 
$$\dim A_0/(A_0\cap A)\le r, \ \ \dim A/(A_0\cap A)\le r(r+1)^{r+1}.$$
Let $p_1:V\oplus V'\to V$ and $p_2:V\oplus V'\to V'$ be the projections.
Set $K:=\{v'\in V' \ |\ (0,v')\in A_0\}\sub V'$ and $I:=p_1(A_0)\sub V$.
Note that the subspaces $K$ and $I$ are $G$-invariant, and $p_2:A_0\to V'$ induces a well defined $G$-invariant linear map 
$$\ov{T}_0:I\to V'/K,$$
such that $A_0$ is the pull-back of the graph $\ov{A}_0\sub I\oplus V'/K$ of $\ov{T}_0$ under the projection $\pi:I\oplus V'\to I\oplus V'/K$.
Furthermore, we have 
$$\dim K\le r, \ \ \dim V/I\le r(r+1)^{r+1}.$$
Hence, we can find a $G$-linear projector $p_I:V\to I$ and a $G$-linear projector $p_C:V'\to C\sub V'$, where $C$ is a $G$-invariant
complement to $K$, such that $p_C(K)=0$. Now we set
$$T_0=p_C\circ \ov{T}_0\circ p_I.$$

Note that $\ov{T}_0$ is obtained as the composition of $T_0|_I$ with the projection $V'\to V'/K$. 
Let $\ov{T}:I\to V'/K$ denote the composition of $T|_I$ with the projection $V'\to V'/K$, and let $\ov{A}\sub I\oplus V'/K$ denote the graph of
$\ov{T}$. It is easy to see that $\pi$ induces a surjection 
$$A_0/(A_0\cap A)\to \ov{A}_0/(\ov{A}_0\cap \ov{A}),$$
so we get
$$\rk(\ov{T}-\ov{T}_0)=\dim \ov{A}_0/(\ov{A}_0\cap \ov{A})\le r.$$
But $\ov{T}-\ov{T}_0$ is obtained from $T-T_0$ by restricting to the subspace $I\sub V$ and composing with the projection $V'\to V'/K$,
so we get
$$\rk(T-T_0)\le \dim K+\dim V/I+\rk(\ov{T}-\ov{T}_0)\le 2r+r(r+1)^{r+1}.$$
\end{proof}

%Now we can apply the similar argument in the Galois setup of Theorem B.

\subsection{Proof of Theorem B}

%\begin{proof}[Proof of Theorem B]
(i) Let us apply Theorem A to the collection $(\si A)$ of all Galois conjugates of $A$.
Note that the action of $G$ is only $\k$-linear, so we should view this is a collection of $\k$-linear subspaces, and use
the dimension function $\dim_{\k}=[E:\k]\cdot \dim_E$.
However, the resulting $G$-invariant $\k$-subspace $W$ is in the lattice generated by $(\si A)$, so it is actually
an $E$-linear subspace.  But $G$-invariant $E$-linear subspaces of $V$ are precisely subspaces obtained
by extension of scalars from $\k$-linear subspaces of $V_0$. This gives the statement.

\noindent
(ii) We apply the same strategy as in the proof of Lemma \ref{operators-cor}.
First, we find a $G$-invariant $E$-subspace $W$ approximating the graph $A$ of $T$. Then we use the fact that $W$ is an extension
of scalars from $W_0\sub V_0\oplus V'_0$ and construct $\k$-linear projectors $p_C:V'_0\to C\sub V'_0$ and $p_I:V_0\to I$,
as in the proof of Lemma \ref{operators-cor}, where $I=p_1(W_0)$ and $C$ is a complement in $V'_0$ to $(0\oplus V'_0)\cap W_0$.
This gives the required operator defined over $\k$.
%\end{proof}

\section{Almost invariant subsets}\label{set-sec}

The following result is a more concrete version of a theorem of Neumann \cite{Neumann} (see also \cite{BPP} for related results),
stating that if a subset $A$ in a $G$-set $X$ satisfies $|A\setminus gA|\le r$ for every $g\in G$ and some $r>0$, then there exists  
a $G$-invariant subset $A_0\sub X$ such that 
$$|A\setminus A_0|+|A_0\setminus A|<2r.$$
We make an extra assumption that $G$ is finite but as a bonus we get an explicit construction of $A_0$.

\begin{theorem}\label{set-thm} 
Let $X$ be a set with action of a finite group $G$. Let $A\sub X$ be a subset such that for any $g\in G$ one has
$|A\setminus gA|\le r$ for some $r>0$. For every subset $S\sub G$ we denote $A_S:=\cap_{g\in S} gA.$
Now consider the $G$-invariant subset
$$A_0:=\cup_{|S|>|G|/2} A_S.$$
Then
$$|A\setminus A_0|+|A_0\setminus A|\le 2r.$$
\end{theorem}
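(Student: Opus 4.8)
The plan is to reduce the problem to a single $G$-orbit at a time, on which the set $A_0$ takes a completely transparent form. First I would rewrite the definition of $A_0$: a point $x\in X$ lies in $A_0$ precisely when there is a subset $S\sub G$ with $|S|>|G|/2$ and $x\in gA$ for all $g\in S$, which happens if and only if $k(x):=|\{g\in G: x\in gA\}|>|G|/2$. Since $x\in gA\iff g^{-1}x\in A$, and for $x$ in a fixed orbit $O$ the element $g^{-1}x$ runs over $O$ as $g$ runs over $G$, hitting each point of $O$ exactly $|G|/|O|$ times (orbit--stabilizer), the count $k(x)=(|G|/|O|)\cdot|A\cap O|$ is \emph{constant} along $O$. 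Consequently $A_0\cap O$ equals $O$ when $|A\cap O|>|O|/2$ and is empty otherwise; that is, $A_0$ is obtained by a per-orbit majority vote.

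Writing $a_O:=|A\cap O|$ and $n_O:=|O|$, this gives the symmetric difference orbit by orbit:
$$|A\setminus A_0|+|A_0\setminus A|=\sum_O \min(a_O,\,n_O-a_O),$$
the sum being finite since only orbits with $0<a_O<n_O$ contribute. It then remains to bound the right-hand side by $2r$, and for this I would use the hypothesis in averaged form. Summing $|A\setminus gA|\le r$ over $g\in G$ and counting the pairs $(x,g)$ with $x\in A$ and $x\notin gA$, one finds
$$r|G|\ge \sum_{g\in G}|A\setminus gA|=\sum_{x\in A}\bigl(|G|-k(x)\bigr)=\sum_O a_O\left(|G|-\frac{|G|}{n_O}a_O\right)=|G|\sum_O \frac{a_O(n_O-a_O)}{n_O},$$
so that $\sum_O a_O(n_O-a_O)/n_O\le r$. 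The argument then closes with the elementary inequality $\min(a,n-a)\le 2\,a(n-a)/n$, valid for $0\le a\le n$ (check the two cases $a\le n/2$ and $a\ge n/2$), which yields $\sum_O\min(a_O,n_O-a_O)\le 2r$.

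The step I expect to require the most care is precisely this last passage, where one must avoid a naive factor-of-two loss. A direct global averaging bounds $|A\setminus A_0|$ and $|A_0\setminus A|$ separately, each by $2r$, and only produces $4r$ for their sum. What rescues the sharp constant is that the \emph{symmetric} quantity $a_O(n_O-a_O)/n_O$ simultaneously controls both the deletions (on minority orbits) and the insertions (on majority orbits) within the single budget $\sum_g|A\setminus gA|\le r|G|$; the orbit reduction, i.e.\ the constancy of $k$ on orbits, is exactly what makes this symmetric quantity appear. Note that only the one-sided hypothesis $|A\setminus gA|\le r$ is used, and finiteness of $G$ enters solely to guarantee that orbits are finite.
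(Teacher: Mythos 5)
Your proof is correct, but it takes a genuinely different route from the one the paper gives for this theorem. Both arguments begin with the same count: summing the hypothesis over $g$ bounds the incidence set $P=\{(a,g): a\in A,\ ga\notin A\}$ by $|P|\le r|G|$, and both use the characterization $A_0=\{x\in X: k(x)>|G|/2\}$ where $k(x)=|\{g\in G: x\in gA\}|$. From there the paper never decomposes $X$ into orbits: it considers the two subsets $P_1, P_2\subset P$ consisting of pairs lying over $A\setminus A_0$ under $(a,g)\mapsto a$ and over $A_0\setminus A$ under $(a,g)\mapsto ga$, shows each has fibers of size at least $|G|/2$, and then — this is the crux of the paper's argument — proves $P_1\cap P_2=\emptyset$ (a point of $A\setminus A_0$ has at least half of its translates outside $A$, while its image in $A_0\setminus A$ has more than half of its translates inside $A$, and these two counts are complementary), whence $\tfrac{|G|}{2}\bigl(|A\setminus A_0|+|A_0\setminus A|\bigr)\le |P_1|+|P_2|\le |P|\le r|G|$. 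You instead localize on orbits: the constancy of $k$ on each orbit identifies $A_0$ as a per-orbit strict-majority vote, the count $|P|$ is evaluated \emph{exactly} as $|G|\sum_O a_O(n_O-a_O)/n_O$, and the theorem follows from the elementary inequality $\min(a,n-a)\le 2a(n-a)/n$. Your route makes the structure of $A_0$ (and in particular its $G$-invariance) completely transparent, turns the key estimate into an identity, and isolates the factor-of-two bookkeeping in one clean inequality; the paper's disjointness trick, by contrast, needs no orbit analysis at all, which keeps it closer in spirit to the lattice-theoretic arguments of Theorem A, where no analogue of the orbit decomposition is available. I will note that the paper's source file actually contains, as leftover uncompiled text after the formal end of the document, a draft of essentially your argument (the inequality $\sum_i a_ib_i/p_i\le r$ and the majority set $\sqcup_{\{i:\,a_i\ge p_i/2\}}X_i$); your version, with the strict majority $a_O>n_O/2$, is the one that literally produces the set $A_0$ of the statement rather than a possibly different invariant set. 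All of your steps check out, including the implicit handling of infinite $X$: orbits of a finite group are finite, and all terms in the counts are nonnegative, so the summation arguments are valid.
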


\begin{proof} 
We have
$$A\setminus A_0=\cup_{|S|>|G|/2} A\setminus A_S=\{a\in A \ | \ |\{g\in G \ |\ ga\in A\}|\le |G|/2\}.$$
Now let us consider the set 
$$P:=\{(a\in A, g\in G) \ | \ ga\not\in A\}.$$
Considering the fibers of the projection of $P$ to $G$ and using the assumption, we see that
$$|P|\le r\cdot |G|.$$

Now, let us consider the projection $p_A:P\to A$.
For every $a\in A\setminus A_0$, we have 
$$|p_A^{-1}(a)|=|\{g\in G \ |\ ga\not\in A\}|\ge \frac{|G|}{2}.$$
Hence, setting
$$P_1:=p_A^{-1}(A\setminus A_0),$$
we get
\begin{equation}\label{A-A0-ineq}
\frac{|G|}{2}\cdot |A\setminus A_0|\le |P_1|.
\end{equation}

Next, setting $B=X\setminus A$, we observe that 
$$A_0\setminus A=A_0\cap B=\cup_{|S|>|G|/2} A_S\cap B=\{b\in B \ | \ |\{g\in G \ |\ gb\in A\}|>|G|/2\}.$$
Let us consider the projection
$$p_B:P\to B: (a,g)\mapsto ga$$
and set
$$P_2:=p_B^{-1}(A_0\cap B).$$
Since for every $b\in A_0\cap B$, we have
$$|p_B^{-1}(b)|=|\{g\in G \ |\ g^{-1}b\in A\}|>\frac{|G|}{2},$$
we deduce that
\begin{equation}\label{B-A0-ineq}
\frac{|G|}{2}\cdot |A_0\cap B|\le |P_2|.
\end{equation}

Finally, we claim that $P_1$ and $P_2$ do not intersect. Indeed, suppose
$(a,g_0)\in P_1\cap P_2$. Then
$$|\{g\in G \ |\ ga\not\in A\}|\ge \frac{|G|}{2}$$
and 
$$|\{g\in G \ |\ gg_0a\in A\}|> \frac{|G|}{2},$$
which is impossible since
$$|\{g\in G \ |\ gg_0a\in A\}|=|\{g\in G \ |\ ga\in A\}|=|G|-|\{g\in G \ |\ gg_0a\in A\}|.$$

Thus, combining \eqref{A-A0-ineq} and \eqref{B-A0-ineq}, we get
$$\frac{|G|}{2}\cdot ( |A\setminus A_0|+|A_0\setminus A|)\le |P_1|+|P_2|\le |P|\le r\cdot |G|,$$
which gives
$$|A\setminus A_0|+|A_0\setminus A|\le 2r.$$
\end{proof}

\begin{remark}
One can see from the proof that the only case when we possibly do not get a strict inequality
$$|A\setminus A_0|+|A_0\setminus A|<2r$$
is when $A_0\sub A$ and $P_1=P$, which is equivalent to $A_S=\cap_{g\in G}gA$ whenever $|S|>|G|/2$.
In this case we can replace $A_0$ with
$$A'_0:=\cup_{|S|\ge |G|/2} A_S.$$
Since for any $S\sub G\setminus \{1\}$ with $|S|\ge |G|/2$, we have $A\cap A_S=\cap_{g\in G}gA$,
assuming that $A$ is not $G$-invariant, we get $A\setminus A'_0\neq\emptyset$.
In this case, running the similar argument to the above proof, we get that
$$|A\setminus A'_0|+|A'_0\setminus A|<2r.$$
\end{remark}

\medskip

Theorem \ref{set-thm} suggests that in the linear algebra setup with $A$ a linear subspace of a $G$-representation $V$,
such that $\dim(A/A\cap gA)\le r$, for finite $G$, one can try to define the approximating $G$-invariant subspace as
\begin{equation}\label{G-inv-subspace-eq}
A_0:=\sum_{|S|>|G|/2} A_S,
\end{equation}
where $A_S=\cap_{g\in S} gA$.

\medskip

\noindent
{\bf Question.} {\it Does there exist a polynomial (or even a linear) function $c(r)$
 such that for $(V,G,A)$ as above and $A_0$ given by \eqref{G-inv-subspace-eq} we have 
$$\dim A/(A_0\cap A) \leq c(r)\ \text{ and } \ \dim A_0/(A_0\cap A)\leq c(r)?$$
%For $(V,G,A)$ as above and $A_0$ given by \eqref{G-inv-subspace-eq}, is it possible to find estimates from above for
%$$\dim A/(A_0\cap A) \ \text{ and } \ \dim A_0/(A_0\cap A),$$
%which are polynomial (possibly even linear) in $r$?}
}

\end{document}